\newtheorem{thm1}{Theorem}
\newtheorem{theorem}{Theorem}[section]
\newtheorem{prop}[theorem]{Proposition}
\newtheorem{cor}[theorem]{Corollary}
\newtheorem{conjecture}[theorem]{Conjecture}
\newtheorem{problem}[theorem]{Problem}
\newtheorem*{claim*}{Claim}
\theoremstyle{definition}
\newtheorem{rmk}[theorem]{Remark}
\newcommand\Hy{\mathbb{H}}
\newcommand\Sph{\mathbb{S}}
\newcommand\Z{\mathbb{Z}}
\newcommand\R{\mathbb{R}}
\newcommand\C{\mathbb{C}}
\newcommand\Orb{\mathcal{O}}
\newcommand\cT{\mathcal{T}}
\newcommand\rank{\mathrm{rank}}
\newcommand\vol{\mathrm{vol}}
\newcommand\dist{\mathrm{dist}}
\DeclareMathOperator{\PSL}{PSL}
\begin{document}
\title{Arithmetic Kleinian groups generated by elements of finite order}


\author{Mikhail Belolipetsky}\thanks{Belolipetsky is partially supported by CNPq and FAPERJ research grants.}
\address{
IMPA\\
Estrada Dona Castorina, 110\\
22460-320 Rio de Janeiro, Brazil}
\email{mbel@impa.br}

\begin{abstract}

We show that up to commensurability there are only finitely many cocompact arithmetic Kleinian groups generated by rotations. This implies, in particular, that there exist only finitely many conjugacy classes of cocompact two generated arithmetic Kleinian groups. The proof of the main result is based on topological expansion properties of maximal arithmetic $3$-orbifolds. 
We generalize an inequality of Gromov and Guth and combine it with the new bounds for the isoperimetric constant, hyperbolic volume and tube volume of the quotient orbifolds.

\end{abstract}

\maketitle

\section{Introduction}

A \emph{Kleinian group} is a discrete subgroup of the group $\PSL(2, \C)$, which is the full group of orientation preserving isometries of the hyperbolic $3$-space $\Hy^3$. A discrete subgroup of finite covolume is called a \emph{lattice}. An important class of Kleinian groups which are lattices is formed by arithmetic Kleinian groups. 

The most basic classes of Kleinian groups are the groups generated by two elements and closely related to them groups generated by three involutions (see \cite{Gil97, Bel02}). These groups in general and arithmetic Kleinian groups of these types in particular were extensively studied by many authors (see e.g. \cite{Kli89, HLM91, GM94, GMMR97, Gil97, GMM98, MM99, CMMB02, Bel02, KK05}). Still it was not known so far, for instance, whether or not there are finitely many conjugacy classes of \mbox{$2$-generator} arithmetic Kleinian groups. Our first observation in this paper is that for the finiteness problems it is more convenient to consider the class $\mathcal{C}$ of \emph{all} lattices generated by elements of finite order without any restriction on the number of generators. We shall call a lattice in $\mathcal{C}$ \emph{maximal} if it is not properly contained in any other lattice from $\mathcal{C}$. Our main result is the following theorem.

\begin{thm1}\label{thm1}
There exist only finitely many conjugacy classes of maximal cocompact arithmetic Kleinian groups generated by elements of finite order. 
\end{thm1}

Here are some immediate corollaries of Theorem~\ref{thm1}.

\begin{cor}\label{cor1}
There are only finitely many commensurability classes of cocompact arithmetic Kleinian groups which contain groups generated by elements of finite order. 
\end{cor}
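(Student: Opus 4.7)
The plan is to deduce the corollary directly from Theorem~\ref{thm1} via the standard ``enlarge to a maximal lattice'' argument, using that any subgroup of class $\mathcal{C}$ inside a cocompact arithmetic Kleinian group is itself a lattice.

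First I would observe the following: if $\Gamma$ is a cocompact arithmetic Kleinian group containing some $\Gamma_0 \in \mathcal{C}$, then $\Gamma_0$ is by definition a lattice in $\PSL(2,\C)$, so $[\Gamma : \Gamma_0] = \vol(\Hy^3/\Gamma_0) / \vol(\Hy^3/\Gamma) < \infty$. Hence $\Gamma$ and $\Gamma_0$ are commensurable. So the commensurability class of $\Gamma$ is determined by that of $\Gamma_0$, and it suffices to bound the number of commensurability classes realized by groups in $\mathcal{C}$.

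Next I would show that every $\Gamma_0 \in \mathcal{C}$ sits inside a maximal element of $\mathcal{C}$. The set of lattices $\Lambda \in \mathcal{C}$ with $\Lambda \supset \Gamma_0$ is finite: any such $\Lambda$ is commensurable with $\Gamma_0$, and arithmetic covolumes in a fixed commensurability class admit a positive lower bound (so the indices $[\Lambda : \Gamma_0]$ are bounded, and for each bounded index there are only finitely many overgroups of $\Gamma_0$ inside $\PSL(2,\C)$). Consequently the finite poset of elements of $\mathcal{C}$ above $\Gamma_0$ has a maximal element $\Lambda_0$, and $\Lambda_0$ is maximal in $\mathcal{C}$ in the sense of the paper. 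In particular, $\Gamma$ is commensurable with the maximal lattice $\Lambda_0 \in \mathcal{C}$.

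Finally, Theorem~\ref{thm1} provides finitely many conjugacy classes of such maximal $\Lambda_0$; in particular, finitely many commensurability classes. Since we have just shown that the commensurability class of any $\Gamma$ as in the statement coincides with the commensurability class of some such $\Lambda_0$, the corollary follows. The only potentially nontrivial input beyond Theorem~\ref{thm1} is the existence of a maximal overgroup in $\mathcal{C}$, which I expect to be the main (but very mild) obstacle, and which I would handle via the Borel-type lower bound on covolumes in an arithmetic commensurability class.
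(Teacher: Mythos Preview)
Your proposal is correct and follows essentially the same route as the paper, which simply notes that ``every such commensurability class will contain at least one maximal group from $\mathcal{C}$'' and then invokes Theorem~\ref{thm1}. You have merely spelled out the existence of a maximal overgroup in $\mathcal{C}$ more carefully (via the covolume lower bound), which the paper leaves implicit.
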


This result follows from the fact that every such commensurability class will contain at least one maximal group from $\mathcal{C}$.
Note that if a Kleinian group $\Gamma_0$ is generated by elements of finite order then the orbifold $\Hy^3/\Gamma_0$ has the first Betti number equal to zero, and thus can be called an orbifold rational homology sphere. Therefore, our corollary provides a partial answer to Question~6.1 from \cite{LMR06}, which asks about finiteness of the number of commensurability classes of these spheres. The conjectural answer to the Long--Maclachlan--Reid question is negative so Corollary~\ref{cor1} and also more general Theorem~\ref{thm2.1} imply that we should expect that most of the orbifold arithmetic rational homology spheres are non-commensurable with the compact hyperbolic $3$-orbifolds whose underlying space is an elliptic manifold.

\begin{cor}\label{cor2}
There are only finitely many conjugacy classes of cocompact torsion-free $2$-genera\-tor arithmetic Kleinian groups. 
\end{cor}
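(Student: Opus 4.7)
The plan is to reduce to Theorem~\ref{thm1} via the classical \emph{three-half-turn} construction and then to apply a uniform index bound. Let $\Gamma = \langle A, B \rangle$ be a cocompact torsion-free 2-generator arithmetic Kleinian group. Cocompactness together with torsion-freeness force $A$ and $B$ to be loxodromic with distinct, non-asymptotic axes, since otherwise $\langle A, B \rangle$ would share a fixed point on $\partial \Hy^3$, which is impossible for a cocompact lattice in $\PSL(2, \C)$. Let $\iota$ denote the half-turn about the common perpendicular of the axes of $A$ and $B$; a standard calculation yields half-turns $\iota_A, \iota_B$ with $A = \iota_A \iota$ and $B = \iota\, \iota_B$. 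Set $\tilde\Gamma = \langle \iota, \iota_A, \iota_B\rangle$. Torsion-freeness of $\Gamma$ rules out any odd-length relation among these three generators (such a relation would place one of the involutions inside the even-length subgroup, hence inside $\Gamma$), so the parity homomorphism $\tilde\Gamma \to \Z/2$ sending every generator to $1$ is well-defined, with kernel exactly $\Gamma$; in particular $[\tilde\Gamma : \Gamma] = 2$.

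The group $\tilde\Gamma$ is cocompact arithmetic (commensurable with $\Gamma$) and generated by three elements of order~$2$, so $\tilde\Gamma \in \mathcal{C}$. It is therefore contained in a maximal element $M \in \mathcal{C}$, and by Theorem~\ref{thm1} this $M$ is conjugate in $\PSL(2, \C)$ into one of finitely many fixed groups $M_1, \dots, M_r$. After conjugation, $\tilde\Gamma \le M_i$ for some $i$. Each $M_i$ is finitely generated and therefore has only finitely many subgroups of any given finite index, so the corollary will follow as soon as the index $[M_i : \Gamma] = 2[M_i : \tilde\Gamma]$ is bounded uniformly over all admissible $\Gamma$.

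The main obstacle is this uniform index bound. Via the covolume identity
\[
[M_i : \Gamma] \;=\; \frac{\vol(\Gamma \backslash \Hy^3)}{\vol(M_i \backslash \Hy^3)}
\]
and the finite range of possible denominators (only finitely many $M_i$), the problem reduces to a uniform upper bound on $\vol(\Gamma \backslash \Hy^3)$ over cocompact torsion-free 2-generator arithmetic Kleinian groups, equivalently on the covolume of the three-involution overgroup $\tilde\Gamma \in \mathcal{C}$. I would attempt to extract such a bound by revisiting the topological expansion machinery behind Theorem~\ref{thm1}: the Gromov--Guth-style isoperimetric inequality, the volume estimate and the tube-volume bound should apply not only to maximal lattices in $\mathcal{C}$ but to any lattice in $\mathcal{C}$ generated by a bounded number of torsion elements, yielding a covolume bound that depends only on that number of generators. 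Inserting such an estimate into the chain above closes the argument.
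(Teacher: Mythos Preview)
The first half of your argument—the three-half-turn construction placing $\Gamma$ with index $2$ inside a lattice $\tilde\Gamma\in\mathcal{C}$, and the reduction via Theorem~\ref{thm1} (equivalently Corollary~\ref{cor1}) to finitely many commensurability classes—is correct and matches the paper exactly.

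The gap is in the second half. The paper does \emph{not} obtain a uniform covolume bound for $\tilde\Gamma$ by re-running the Gromov--Guth machinery; instead it invokes the Biringer--Souto theorem \cite{BS11}, which says that a commensurability class of closed hyperbolic $3$-manifolds with injectivity radius bounded below contains only finitely many torsion-free $2$-generator groups. Within a fixed arithmetic commensurability class the injectivity radius \emph{is} uniformly bounded below (no Lehmer-type input is needed once the field and quaternion algebra are fixed), so \cite{BS11} applies and finishes the proof.

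Your proposed alternative breaks at a specific point: the spectral gap. The three ingredients you list—Theorems~\ref{thm_GG}, \ref{thm_vol_hyp}, \ref{thm_vol_tube}—do indeed apply to $\Hy^3/\tilde\Gamma$ (its underlying space is $\Sph^3$ by Armstrong and Perelman). But the inequality of Theorem~\ref{thm_GG} yields a volume bound only if $\lambda_1$ of some manifold cover is bounded away from $0$. In Section~\ref{sec7} the bound $\lambda_1\ge\frac34$ comes from the fact that \emph{maximal} arithmetic lattices are congruence, hence contain a torsion-free congruence subgroup to which the Vign\'eras and Burger--Sarnak results apply. Your $\tilde\Gamma$ is not maximal and need not be congruence; its natural manifold cover $\Gamma$ is likewise not known to be congruence, and for non-congruence arithmetic subgroups there is no uniform spectral gap (towers of covers can have $\lambda_1\to 0$). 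Without this input the Gromov--Guth inequality gives no bound on $V_{hyp}$. Note too that if your scheme worked it would already prove the stronger Theorem~\ref{thm_2gen} (finiteness for \emph{all} $2$-generator cocompact arithmetic Kleinian groups, torsion-free or not), bypassing the geometric-limit and covering-theorem arguments of \cite{BS11} that the paper invokes for that result.
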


To prove this corollary we recall that any $2$-generator Kleinian group is contained with index at most $2$ in a Kleinian group generated by three involutions (half-turns) \cite{Bel02, Gil97}. Hence by Corollary~\ref{cor1} there are only finitely many commensurability classes of such groups. It turns out that there are only finitely many torsion-free $2$-generator groups in a commensurability class (by \cite{BS11}), hence the corollary follows.
We recall that a result of this kind was previously proved in \cite{BS11} following ideas of Agol, but their proof is conditional on a deep open conjecture about short geodesics of arithmetic $3$-manifolds. 

In fact, it is possible to use our results to prove an even stronger finiteness theorem which covers all $2$-generator cocompact arithmetic Kleinian groups (see Theorem~\ref{thm_2gen}). This requires an extension of the methods from \cite{BS11} to the orbifold fibrations. We recall that finiteness of arithmetic Kleinian groups generated by two elliptic elements was previously established in \cite{MM99} by a different method but the general finiteness problem remained open. It is worth mentioning that there is no analogous property for the groups generated by more than two elements. For example, consider an orbifold $\Orb$ obtained by a $(2,0)$-Dehn filling of the figure-$8$ knot complement. This orbifold fibers over a circle with fiber a torus with one order $2$ singularity, which gives a \mbox{$3$-generator} presentation of its fundamental group. It is well known and not hard to check that $\Orb$ is arithmetic. Therefore, the $m$-fold cyclic coverings of $\Orb$ induced from the fibration give rise to an infinite sequence of commensurable compact arithmetic $3$-orbifolds with $3$-generator fundamental groups. Examples of this kind can be also produced for a bigger number of generators. 

\bigskip

The proof of Theorem~\ref{thm1} is based on a generalized Gromov--Guth inequality:
\begin{equation}\label{sec1:eq1}
\lambda_1^{\frac{n}{n-1}} V_{hyp}^{\frac{n}{n-1}} \le c_{n} T^{-n} V_T,
\end{equation}
where $\lambda_1$ is the first non-zero eigenvalue of the Laplacian of a manifold cover of $\Orb = \Hy^3/\Gamma$, $V_{hyp}$ is the hyperbolic volume of $\Orb$ and $V_T$ is the volume of a tube of radius $T$ of a thick embedding of $\Orb$ in $\R^n$. One can notice a similarity of this approach to \cite{LMR06}, and even more to \cite{Ag06}, where an orbifold version of the Li--Yau inequality for conformal volume was applied to show the finiteness of arithmetic maximal reflection groups of isometries of $\Hy^3$. One of the differences between the inequality \eqref{sec1:eq1} and the Li--Yau type inequalities is that the hyperbolic volume in \eqref{sec1:eq1} appears with the exponent $\frac{n}{n-1} > 1$, which is strictly bigger than the exponent of the tube volume $V_T$. In the proof of \eqref{sec1:eq1} this comes out as a consequence of Falconer's slicing inequality (see Section~\ref{sec3} and \cite{GG12}). This property will be crucial for our application of the inequality \eqref{sec1:eq1} in Section~\ref{sec7}.

We are going to apply the inequality \eqref{sec1:eq1} to a quotient orbifold $\Orb$ of a maximal arithmetic Kleinian group $\Gamma \ge \Gamma_0$, with $\Gamma_0\in \mathcal{C}$. 
The left hand side of \eqref{sec1:eq1} depends on the hyperbolic structure of the orbifold $\Orb$ while the right hand side is essentially determined by the topology of its underlying and singular sets. This brings forward a problem of choosing some invariant of $\Orb$ that would connect the two sides. Our first attempts were focused on arithmetic invariants, torsion in homology and Matveev's complexity. This can be compared with \cite{ABSW}, where the arithmetic invariants were used in a Li--Yau type inequality in order to prove the finiteness of arithmetic maximal reflection groups. It is possible to gain a control of these parameters but unfortunately the bounds that we were able to obtain were exponentially far from the required range. The final choice which appears in the argument below is the size of certain bounded degree triangulations of $\Orb$. Such triangulations are constructed in Section~\ref{sec5} using arithmeticity and then applied in Section~\ref{sec6} to bound the complexity of the underlying space of $\Orb$. Along with these considerations, we recall the results of Gromov and Guth about retraction thickness in order to control the singular structure of $\Orb$. Finally, the well known properties of the spectrum of the Laplacian and volumes of arithmetic $3$-orbifolds 
combined with the generalized Buser and Cheeger inequalities provide us a required control over $h$ and $V_{hyp}$. We shall describe the contents of the paper in more detail after the first reductions in the next section. 

The proofs of the results are effective but we shall not investigate their quantitative side in this paper.

\medskip

\noindent\textbf{Convention.} We shall use letters $c$ with indexes to denote the positive constants. When a constant depends on some parameters, they will be specified in the index. All logarithms in the paper are taken to base $2$.

\section{First reductions and an overview of the proof} \label{sec2}

Let $\Gamma_0 < \PSL(2, \C)$ be a maximal Kleinian group generated by elements of finite order, i.e. $\Gamma_0$ is a maximal lattice in $\mathcal{C}$. By Armstrong's theorem \cite{Arm68}, the  underlying space $|\Hy^3/\Gamma_0|$ of the quotient hyperbolic $3$-orbifold is simply connected. Hence by Perelman's proof of the Poincar\'e conjecture it is diffeomorphic to the \mbox{$3$-sphere} $\Sph^3$. Let $\Gamma > \Gamma_0$ be a maximal lattice containing $\Gamma_0$. Since $\Gamma_0$ is a maximal lattice generated by elements of finite order, it is a normal subgroup of $\Gamma$ and the quotient finite group $G = \Gamma/\Gamma_0$ acts without fixed points on $\Hy^3/\Gamma_0$. It follows that the underlying space of the orbifold $\Hy^3/\Gamma$ is an elliptic $3$-manifold $\Sph^3/G$. 
Moreover, for a future reference we remark that the group $\Gamma_0$ is uniquely determined by $\Gamma$ (indeed, if $\Gamma$ would contain two different maximal Kleinian groups $\Gamma_1, \Gamma_2 \in \mathcal{C}$, then their product is also a Kleinian group $\Gamma_1\Gamma_2 \in \mathcal{C}$, which leads to a contradiction with the maximality condition for $\Gamma_i$).
We are going to prove the following more general result which implies Theorem~\ref{thm1} as a corollary.

\begin{theorem}\label{thm2.1}
There exist only finitely many maximal cocompact arithmetic Kleinian groups $\Gamma$ such that the underlying space of the orbifold $\Hy^3/\Gamma$ is an elliptic $3$-manifold.  
\end{theorem}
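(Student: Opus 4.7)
The overall strategy is to combine the generalized Gromov--Guth inequality \eqref{sec1:eq1} with Borel's finiteness theorem for arithmetic lattices of bounded covolume: it suffices to show that the hyperbolic volume $V_{hyp}$ of $\Orb = \Hy^3/\Gamma$ is bounded above by an absolute constant. I would apply \eqref{sec1:eq1} to $\Orb$ (or to a manifold cover of controlled degree) and extract such a bound by controlling $\lambda_1$ from below and $T^{-n} V_T$ from above, in each case independently of $\Gamma$. The hypothesis that $|\Orb|$ is an elliptic $3$-manifold $\Sph^3/G$ is crucial here: it fixes the target of the thick embedding, since $\Sph^3/G$ admits a canonical smooth embedding into a bounded-dimensional Euclidean space $\R^n$ (for instance via $\Sph^3 \subset \R^4$ and averaging over $G$), putting us in position to estimate $V_T$ for a thickening of this embedding.

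For the left-hand side of \eqref{sec1:eq1} I would pass to a suitable congruence manifold cover of $\Orb$ of degree bounded in terms of $|G|$ and the orders of torsion elements in $\Gamma$, then invoke the known uniform positive lower bounds on $\lambda_1$ for congruence covers of arithmetic hyperbolic $3$-manifolds (Burger--Sarnak and related spectral estimates) together with Buser's comparison of $\lambda_1$ with the Cheeger constant $h$. For the right-hand side the plan is twofold. First, use the arithmetic structure of $\Gamma$ to produce a triangulation of $\Orb$ whose local degree is bounded by a universal constant and whose total number of simplices grows linearly in $V_{hyp}$; this is the role of Section~\ref{sec5}. Second, use such a triangulation to bound the combinatorial complexity of the underlying elliptic manifold $|\Orb|$ and of the $1$-dimensional singular locus $\Sigma \subset |\Orb|$, and then invoke the Gromov--Guth retraction thickness results applied to the pair $(|\Orb|,\Sigma)$ to obtain a thick embedding into $\R^n$ whose tube volume $V_T$ is bounded at most linearly (up to factors in $T$) by $V_{hyp}$.

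Plugging these estimates into \eqref{sec1:eq1} yields an inequality of the shape
\[
c\cdot V_{hyp}^{\frac{n}{n-1}} \;\le\; c_n\, T^{-n} V_T \;\le\; c'_n(T)\cdot V_{hyp},
\]
and since $\frac{n}{n-1} > 1$ this forces $V_{hyp} \le C$ for some absolute constant $C$. Borel's theorem on arithmetic lattices of bounded covolume then produces the desired finiteness. The main obstacle, and the reason the proof occupies several sections, will be the bounded-degree triangulation in Section~\ref{sec5} together with the subsequent complexity estimate for $|\Orb|$ in Section~\ref{sec6}: as the introduction warns, the more obvious choices of complexity invariant (arithmetic data, torsion in homology, or Matveev complexity) yield bounds that are exponentially too weak, so the construction must exploit both arithmeticity and the elliptic-manifold hypothesis delicately in order to keep the right-hand side of \eqref{sec1:eq1} strictly of lower order than the left.
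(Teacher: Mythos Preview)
Your overall architecture is exactly that of the paper: bound $\lambda_1$ below via a congruence manifold cover and Vign\'eras/Burger--Sarnak, bound $V_T$ above via a bounded-degree good triangulation and the embedding result of Section~\ref{sec6}, feed both into Theorem~\ref{thm_GG}, and finish with Borel's theorem. Two points deserve correction, however.

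First, and most importantly, the triangulation produced in Section~\ref{sec5} does \emph{not} have size linear in $V_{hyp}$. Proposition~\ref{prop_vol_hyp} gives $N \le \frac{m V_{hyp}}{v_{\epsilon/2}}(D-1)$, where both the maximal finite-subgroup order $m$ and the scale $\epsilon$ (hence $v_{\epsilon/2}$) depend on $\Gamma$. Only by combining Dobrowolski's bound toward Lehmer's problem with the Chinburg--Friedman inequality \eqref{sec5:eq2} does one get the unconditional estimate $N \le V_{hyp}^{1+\epsilon}$ of Theorem~\ref{thm_vol_hyp}; a truly linear bound would need the Short Geodesic Conjecture (and even then an extra logarithmic factor from $m$ survives). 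Consequently your displayed inequality should read
\[
\lambda_1^{\frac{n}{n-1}} V_{hyp}^{\frac{n}{n-1}} \;\le\; c_{n}c_{D,n}\,V_{hyp}^{1+\epsilon},
\]
and the conclusion $V_{hyp}\le C$ requires the sharper observation $\frac{n}{n-1} > 1+\epsilon$, which the paper arranges by fixing $\epsilon=0.01$ and taking $n$ large (e.g.\ $n=30$). This is precisely why the exponent $\frac{n}{n-1}>1$ coming from Falconer slicing is emphasized in the introduction: it is not merely that the exponent exceeds $1$, but that there is room to absorb the sub-polynomial losses from the injectivity-radius and torsion-order estimates.

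Second, the ``canonical'' embedding of $\Sph^3/G$ via $\Sph^3\subset\R^4$ is not the route taken and would not by itself produce an embedding with the required strong combinatorial thickness of $\cT^1$; the paper instead builds the thick embedding directly from the triangulation (Theorem~\ref{thm_vol_tube}), exploiting that $\pi_1(|\Orb|)$ is carried by a small subcomplex $X_0$ of $\cT^1$ and that $\cT$ admits an $L_D$-exhaustion relative to $X_0$. Also, the detour through Buser's inequality is unnecessary here: the paper uses $\lambda_1(M)\ge \tfrac34$ directly in Theorem~\ref{thm_GG}, and no control on the degree of the manifold cover is needed since that degree cancels in the proof of the inequality.
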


The proof of Theorem~\ref{thm2.1} is based on a study of geometry of arithmetic hyperbolic $3$-orbifolds and their underlying spaces. More precisely, the arithmeticity assumption is necessary only in some parts of the argument (indicated below) but as it is confirmed by the well known examples, it cannot be removed from the statement. 

Let us recall the definition of an arithmetic Kleinian group. Let $k$ be a number field with exactly one complex place, $R$ its ring of integers, and $A$ a quaternion algebra over $k$ ramified at all real places of $k$. Let $\mathfrak{D}$ be a maximal $R$-order of $A$, denote by $\mathfrak{D}^1$ its group of elements of norm $1$. 
Consider a $k$-embedding $\rho: A \hookrightarrow \mathrm{M}(2,\C)$ associated with the complex place of $k$. The projection
$$\Gamma_\mathfrak{D} = P\rho(\mathfrak{D}^1) < \PSL(2,\C),$$
where $P: \mathrm{M}(2,\C) \to \PSL(2,\C)$, is then a discrete finite covolume subgroup of $\PSL(2,\C)$. Any subgroup of $\PSL(2,\C)$ which is commensurable with some such group $\Gamma_\mathfrak{D}$ is called an \emph{arithmetic subgroup} and the field $k$ is called its \emph{field of definition}. Every arithmetic subgroup is contained in some maximal arithmetic subgroup, and these maximal lattices can be effectively described using arithmetic invariants. One of their properties which will be particularly useful for us is that maximal arithmetic Kleinian groups are congruence subgroups of $\PSL(2,\C)$ (see \cite{LMR06}). We refer to \cite{MR03} for a detailed study of arithmetic Kleinian groups and their properties. 

The main part of the proof begins in Section~\ref{sec3} with a generalization of the Gromov--Guth inequality from \cite{GG12} to hyperbolic orbifolds. 

Let $\Orb$ be a closed orientable hyperbolic $3$-orbifold with the underlying space $|\Orb|$ and singular set $\Sigma$, which is a $3$-valent graph in $|\Orb|$. We want to define the notion of thickness for an embedding of $\Orb$ in $\R^n$ which would generalize \emph{retraction thickness} defined for manifold embeddings by Gromov and Guth. Perhaps the most straightforward generalization would be to require that both $|\Orb|$ and $\Sigma$ embed with retraction thickness $T$, however, all our attempts to work with this definition were unsuccessful because the assumptions appear to be not strong enough for proving the inequality in Theorem~\ref{thm_GG}. On the other hand, combinatorial thickness from \cite{GG12} would work better for the inequality but is too restrictive for the applications. 
We recall that an embedding $I: \cT\to\R^n$ of a simplicial complex $\cT$ has \emph{strong combinatorial thickness} $T$ if for any simplices $\Delta_1$, \ldots, $\Delta_J$ in $\cT$ the intersection $\cap_{j=1}^J N_T(I(\Delta_j))$ is non-empty if and only if $\cap_{j=1}^J \Delta_j$ is non-empty. 
The following definition combines the features of both of these notions. 

Assume that the underlying space of $\Orb$ is endowed with a triangulation $\cT$ of bounded degree (i.e. the number of simplices incident to any vertex is bounded by an absolute constant) and such that every $l$-dimensional skeleton of the singular set is contained in the $l$-skeleton of $\cT$. We say that $\Orb$ is embedded in $\R^n$ with \emph{thickness} $T$ if there is an embedding $I_1: \cT^1 \to \R^n$  with strong combinatorial thickness $T$. 
Given such an embedding we will call by the \emph{tube volume} $V_T$ the volume of the $T$-neighborhood of $I_1(\cT^1)$ in $\R^n$. It is not hard to see that assuming the dimension $n$ is sufficiently large, any thick embedding $I_1: \cT^1 \to \R^n$ extends to a topological embedding $I: \Orb \to \R^n$ with retraction thickness $T$ and volume of the $T$-neighborhood of $I(\Orb)$ bigger than $V_T$. We will elaborate more on the relation between thickness for orbifold embeddings and retraction thickness in Section~\ref{sec3.2}.



\begin{theorem} \label{thm_GG} 
Let $\Orb$ be a closed orientable hyperbolic $3$-orbifold with volume $V_{hyp}$ which has a manifold cover with the first non-zero eigenvalue of the Laplacian $\lambda_1$. If $\Orb$ is embedded in $\R^n$, $n \ge 7$ with retraction thickness $T$, we have
$$ \lambda_1^{\frac{n}{n-1}} V_{hyp}^{\frac{n}{n-1}} \le c_{n} T^{-n} V_T .$$
\end{theorem}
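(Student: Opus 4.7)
The plan is to reduce the statement to the manifold Gromov--Guth inequality proved in \cite{GG12} via a finite Riemannian cover. Let $q: M \to \Orb$ be a manifold cover whose first nonzero Laplace eigenvalue equals $\lambda_1$, of some degree $d$, so $\vol(M) = d\,V_{hyp}$. Since $q$ is a local isometry, for any $g:\Orb\to\R$ with lift $f = g\circ q$ both the $L^2$ norm and the Dirichlet energy pull back by multiplication by~$d$, and eigenfunctions on $M$ satisfy the usual variational characterisation regardless of what happens along the preimage of the singular set.

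For the test function construction, fix a Lipschitz $\psi:\R^n\to\R$ and put $f_\psi = \psi\circ I\circ q$ on $M$, normalised so that $\int_M f_\psi = 0$. The Rayleigh quotient bound gives
$$\lambda_1 \int_M f_\psi^2 \;\le\; \int_M |\nabla f_\psi|^2.$$
By the chain rule, $|\nabla f_\psi|(x) \le |\nabla\psi|(I(q(x)))$ pointwise, so the right-hand side is controlled by the integral of $|\nabla\psi|^2$ against the pushforward of the Riemannian volume on $M$ under $I\circ q$. This pushforward is supported in the tube $N_T(I(\Orb))$, and the retraction thickness assumption guarantees that its density there is uniformly bounded (the retraction $r:N_T(I(\Orb)) \to I(\Orb)$ is short, so points of the tube are hit by at most $O(1)$ sheets on the scale~$T$).

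To extract the exponent $n/(n-1)$ one follows the slicing argument of \cite{GG12}: averaging $\psi$ over an appropriate random family of cut-off slicing functions supported on the $T$-tube and invoking Falconer's slicing inequality bounds the expected Dirichlet energy from above by $c_n T^{-n} V_T$. At the same time, the centred $L^2$ norm $\int_M f_\psi^2$ is bounded below by a term of order $V_{hyp}^{n/(n-1)}$ reflecting how far the image $I(\Orb)$ must spread through $\R^n$ by the isoperimetric content of the hyperbolic volume. The overall factor of $d$ coming from the cover appears on both sides and cancels, leaving the claimed orbifold inequality.

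The main obstacle is verifying that the singular structure of $\Orb$ does not disrupt the Falconer/Gromov--Guth scheme. Passing to the smooth cover $M$ side-steps any issue with the Laplacian itself, but one still needs that the retraction $r$ assumed in the thickness hypothesis is compatible with the slicing argument on sheets of $I(\Orb)$ that pass near $\Sigma$. The hypothesis $n \ge 7$ leaves enough codimension so that the $1$-skeleton of a bounded-degree triangulation refining the singular set embeds with strong combinatorial thickness (as discussed after the definition of orbifold thickness), and the retraction thickness then ensures the combinatorics required by the slicing inequality are preserved.
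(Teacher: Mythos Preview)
Your proposal has a genuine gap at its central step. The claim that ``the retraction thickness assumption guarantees that [the pushforward] density there is uniformly bounded (the retraction $r:N_T(I(\Orb)) \to I(\Orb)$ is short, so points of the tube are hit by at most $O(1)$ sheets on the scale~$T$)'' is false. Retraction thickness says only that the $T$-tube retracts onto $I(\Orb)$; it imposes no bound on how much hyperbolic volume the embedding $I$ may collapse into a given region of the tube. If such a density bound held, one would get $d\,V_{hyp} \lesssim T^{-n} V_T$ directly, with no spectral hypothesis at all, and this is exactly what fails for expander manifolds --- the whole point of the Gromov--Guth inequality is that the factor $\lambda_1$ is genuinely needed. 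Consequently your bound on $\int_M |\nabla f_\psi|^2$ does not follow, and the Rayleigh-quotient scheme breaks down. The companion claim that $\int_M f_\psi^2$ is bounded below by $V_{hyp}^{n/(n-1)}$ is likewise unsupported; no mechanism is given for producing that particular exponent from ``isoperimetric content''.

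The paper's argument is structurally quite different from a test-function/Rayleigh-quotient approach. Falconer's slicing inequality is applied not to bound a Dirichlet energy, but to the tube $N_1(X^1)$ of the embedded $1$-skeleton: it yields a family of parallel hyperplanes each meeting the tube in $(n-1)$-area $\lesssim V_1^{(n-1)/n}$. The strong combinatorial thickness of the $1$-skeleton embedding (this is where the specific definition of orbifold thickness is used, not merely retraction thickness of $I(\Orb)$) converts this into the statement that each hyperplane meets $\lesssim V_1^{(n-1)/n}$ simplices of $\cT$, and barycentric extension produces a continuous map $f:\Orb\to\R$ whose fibres are carried by that many triangles. Lifting to $M$ and applying Gromov's \emph{complicated fiber inequality} gives a fibre $G_y$ with $|\chi|_{hyp}(G_y)\ge c\,\lambda_1 V_{hyp}(M)$, while the triangle count bounds $|\chi|_{hyp}(G_y)\le c'\, d\, V_1^{(n-1)/n}$. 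Comparing the two and cancelling $d$ yields the theorem. The exponent $\frac{n}{n-1}$ thus comes from Falconer applied to the tube, and $\lambda_1$ enters through the complicated fiber inequality --- neither arises from a Rayleigh quotient.
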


The proof of the theorem is based on two important inequalities: the complicated fiber inequality of Gromov and Falconer's slicing inequality. This is close to the original idea of Gromov and Guth which is described in \cite{GG12} before the proof of Theorem~3.2 but requires more care. Let us note that Theorem~3.2 of \cite{GG12} applies to the manifolds of any dimension $k \ge 3$ while our orbifold version is restricted to the $3$-dimensional case. We do not know how to generalize the result to higher dimensions and discuss this and related open problems at the end of Section~\ref{sec3}.

We are going to apply Theorem~\ref{thm_GG} to the orbifolds $\Orb = \Hy^3/\Gamma$ with $\Gamma < \PSL(2, \C)$ a maximal arithmetic subgroup and $|\Orb|$ an elliptic $3$-manifold. An important property of retraction thickness is that it is almost invariant under homotopy equivalences (see \cite[Proposition~3.2]{GG12}). An analogue of this property remains valid for orbifold thickness while considering homotopies of $2$ and $3$ dimensional skeletons of a triangulation of $\Orb$. We then combine it with a result on bad expansion of $1$-skeletons of the triangulations that we consider. 

The following Sections \ref{sec4} to \ref{sec6} deal with the bounds for $h$, $V_{hyp}$ and $V_T$.  In Section~\ref{sec4} we prove the orbifold analogues of the well known inequalities of Cheeger and Buser which relate the isoperimetric constant $h$ and the first non-zero eigenvalue $\lambda_1$ of the Laplacian on $\Orb$. These inequalities allow us to bound $\lambda_1$ in terms of $h$ and give another version of the inequality in Theorem~\ref{thm_GG} which is more similar to its manifold version in \cite{GG12}. In Section~\ref{sec5} we use arithmeticity to show that the hyperbolic volume of $\Orb$ is bounded below by $N^{1/(1+\epsilon)}$, where $N$ is the minimal size of a good bounded degree triangulation of $\Orb$ (see Theorem~\ref{thm_vol_hyp}). An important ingredient of the proof is a known number-theoretical bound towards Lehmer's conjecture about the Mahler measure of algebraic integers. In Section~\ref{sec6} we prove an upper bound for the tube volume using \emph{non-expanding} properties of $1$-skeletons of the good triangulations. 
We collect the results together and use some further properties of arithmetic groups in Section~\ref{sec7}, which finishes the proof of Theorem~\ref{thm1}. Let us note that a large part of the argument is valid or can be quickly generalized to higher dimensional hyperbolic orbifolds and other locally symmetric spaces. This, however, excludes some reductions that were done in this section and the proofs in Sections~\ref{sec3} and \ref{sec6}.

A sketch of a proof of Theorem~\ref{thm_2gen} on finiteness of $2$-generator cocompact arithmetic Kleinian groups is given in the end of Section~\ref{sec7}.

\section{The Gromov - Guth inequality} \label{sec3}

\subsection{Thick embeddings of orbifolds} In this section, we will prove Theorem~\ref{thm_GG}.

\begin{proof}
Let $\Orb = \Hy^3/\Gamma$, $\Gamma < \PSL(2, \C)$, be a closed orientable hyperbolic $3$-orbifold embedded in $\R^n$ with thickness $T$ and tube volume $V_T$. By scaling, we can assume that $T = 1$. We denote by $X^1$ the image of the embedding of the $1$-skeleton of the triangulation $\mathcal{T}$ of $\Orb$ which contains $\Sigma$ as $\Orb$ is orientable. Since $n \ge 7$, we can extend this embedding to a topological embedding $I: \Orb \to \R^n$ and let $X : = I(\Orb)$. 

By Selberg's lemma (cf. \cite[Theorem~1.3.5]{MR03}), the orbifold $\Orb$ has a smooth cover of a degree $d$. We shall denote this covering manifold by $M$ and let $\lambda_1$ be the first non-zero eigenvalue of the Laplacian on $M$. The triangulation $\cT$ lifts to a triangulation of $M$ which we denote by $\cT_1$.

We now apply the Falconer slicing theorem \cite{Fal80} to the neighborhood $N_1(X^1)$ in $\R^n$ which has volume $V_1$.  It implies that we can slice $\R^n$ by parallel hyperplanes $P_y$, $y\in\R$, so that each of them intersects $N_1(X^1)$ in surface area $\lesssim V_1^{\frac{n-1}{n}}$. We want to translate this inequality to the information about intersections of the hyperplanes with $X^1$. If we try to do it directly we notice that $X^1$ can wobble a lot which implies uncontrollably many intersections, but we can approximate it continuously to remove this effect. We will achieve that by covering $N(X^1)$ by the balls and then projecting them to $X^1$, the latter made possible by our assumption on thickness of the embedding.

Choose a maximal $1/4$-separated set of points $p_1$, $p_2$, \ldots\ in $N_{1/2}(X^1)$ and let $U = \cup_i B(p_i, 1/4)$. The argument of \cite[Lemma~3.5]{GG12} implies that each of the hyperplanes $P_y$ in our family intersects $\lesssim V_1^{\frac{n-1}{n}}$ of the balls $B(p_i, 1/4)$. As \mbox{$I_1: \cT^1 \to X^1 \subset \R^n$} is an embedding with strong combinatorial thickness $1$ and triangulation $\cT$ has bounded degree, there is a retraction $\Psi: U \to X^1$ such that the image $\Psi(B(p_i, 1/4))$ of any ball is contained in $\lesssim 1$ simplices of $\cT^1$ (cf. \cite[Proposition~3.6]{GG12}). Let $f_1$ be a map sending all points in $\Psi\left(B(p_i, 1/4)\cap P_y\right)$ to $y$.

This defines a continuous map $f_1: X^1 \to \R$ which we can extend barycentrically to a continuous map $f: X \to \R$ such that for every $y\in \R$ the preimage $f^{-1}(y)$ intersects $\cT$ in $\lesssim V_1^{\frac{n-1}{n}}$ triangles.
We have:
\begin{equation*}
\xymatrix{
M \ar@{-->}[rrd]^{g} \ar@{->}[d]^{[d]} & & \\
\Orb \ar@{->}[r]_I & X \ar@{->}[r]_f & \R
}
\end{equation*}

We now bring into picture the hyperbolic geometry of $\Orb$. The induced map \mbox{$g\!:\! M \to \R$} is continuous and we can apply to it the Gromov complicated fiber inequality \cite[Section~6.2]{Gr09} (see also \cite{Lac06} for a related result). It implies that there exists a fiber $G_y = g^{-1}(y) \subset M$ with 
\begin{equation}\label{sec3:eq1}
|\chi|_{hyp}(G_y) \ge c_1\lambda_1 V_{hyp}(M),
\end{equation}
where $|\chi|_{hyp}(G_y)$ denotes the absolute value of the sum of the Euler characteristics of all the hyperbolic connected components of the surface $G_y$ and the constant $c_1 > 0$ does not depend on $M$ or $g$.

As there are no singularities in the interior of the simplices of $f^{-1}(y)$, its triangulation lifts to a triangulation of the surface $G_y \subset M$ with $\lesssim dV_1^{\frac{n-1}{n}}$ triangles. This gives us an upper bound for the Euler characteristic $|\chi|_{hyp}$ of $G_y$:
\begin{equation}\label{sec3:eq2}
|\chi|_{hyp}(G_y) \le c_2 d V_1^{\frac{n-1}{n}}.
\end{equation}

From \eqref{sec3:eq1} and \eqref{sec3:eq2} we obtain
\begin{align*}
c_1\lambda_1(M)V_{hyp}(M) & \le c_2 d V_1^{\frac{n-1}{n}}; \\
\lambda_1(M)V_{hyp}(\Orb) & \le c V_1^{\frac{n-1}{n}},
\end{align*}
where the constant $c$ depends only on $n$. This finishes the proof of the theorem.
\end{proof}

\subsection{Relation to retraction thickness for hyperbolic manifolds} \label{sec3.2} Given a retraction thickness $T$ embedding $I: M \to \R^n$ of a closed hyperbolic $3$-manifold $M$, it should be possible to find a Delaunay triangulation restricted by $X = I(M)$ in the sense of \cite{ES97} which is embedded with strong combinatorial thickness $T$. The details of this procedure still have to be carefully checked.  If true, it would imply that our Theorem~\ref{thm_GG} gives an alternative proof of Theorem~3.2 from \cite{GG12} for the case when dimension $k = 3$. 

\subsection{Some open problems}
There remain open the questions about the higher dimensional analogues of the Gromov--Guth inequality for orbifolds. In one of the previous versions of this paper I stated the inequality for any dimension $k \ge 3$ assuming that the singular set has codimension $\ge 2$ and using a weaker notion of thickness. The idea of the proof was to generalize the proof of Theorem~3.2 in \cite{GG12} directly to orbifolds based on the orbifold Cheeger constant which is discussed in the next section. Note that the proof in \cite{GG12} does not require the complicated fiber inequality essentially substituting it by Thurston's simplex straightening method which is available for any dimension. However, in the orbifold setting it appears to be hard to control the behavior of the straightened simplices with respect to the singular locus. The complicated fiber inequality and the new definition of thickness allow us to get around this difficulty while bringing up further questions.

The first question is about the definition of thick embeddings of orbifolds. It would be desirable to avoid referring to a particular choice of a triangulation in the definition~--- compare with retraction thickness for manifold embeddings in \cite{GG12}. From this point of view we can try the following: say that a closed hyperbolic $k$-orbifold $\Orb$ is embedded in $\R^n$ with thickness $T$ if there is a topological embedding $I: \Orb \to \R^n$ with retraction thickness $T$ such that the singular CW-complex $\Sigma$ is embedded with strong combinatorial thickness $T$. Even for $k = 3$, the arguments presented above are not sufficient for proving the theorem under these weaker assumptions but I do not know any potential counterexamples that would show that such a generalization is false. 
 
More specificaly, for higher dimensions we can state two open problems. The first one goes back to Gromov's paper \cite{Gr09} (see also Appendix in \cite{GG12}):
\begin{problem}\label{prob_3.1}
Is there a higher dimensional analogue of the complicated fiber inequality?
\end{problem}
\noindent
If solved, it could potentially allow us to generalize the proof of Theorem~\ref{thm_GG} to dimensions $k >3$. The second problem is about proving the theorem without using the complicated fiber inequality, for instance, as it was done for the manifolds in \cite{GG12}:
\begin{problem}\label{prob_3.2}
Is there another proof of Theorem~\ref{thm_GG} which does not use the complicated fiber inequality and generalizes to higher dimensions or applies to a weaker notion of thickness?
\end{problem}

\section{The Cheeger and Buser inequalities for hyperbolic orbifolds} \label{sec4}

In the late 1960s, Cheeger introduced an isoperimetric constant $h$ to bound below the first eigenvalue $\lambda_1$ of the Laplacian on a closed Riemanian manifold \cite{Ch70}. Cheeger's result is that
\begin{equation*}
\lambda_1 \ge \frac{h^2}{4}.
\end{equation*}
Later on in \cite{Bus82}, Buser proved an upper bound for $\lambda_1$ in terms of $h$ and thus showed that these two invariants of Riemannian manifolds are essentially equivalent. For closed hyperbolic $k$-manifolds Buser's inequality is
\begin{equation*}
\lambda_1 \le 2(k-1)h + 10h^2.
\end{equation*}

We shall prove orbifold versions of these inequalities. 

The orbifold Cheeger constant can be defined by:
$$ h(\Orb)  = \inf \frac{\vol_{k-1}(\partial U)}{\vol_k(U)}, $$
where the infimum is taken over all open subsets $U \subset \Orb$ with the hyperbolic volume $\vol_k(U) \le \frac12 \vol_k(\Orb)$ and Hausdorff measurable boundary $\partial U$, and where $\vol_{k-1}$ denotes the $(k-1)$-dimensional induced measure on $\partial U$.
We can define the Laplace operator $-\tilde{\Delta}$ on $\Orb$ as the unique self-adjoint extension of the Laplacian (see \cite[Section~4.1]{EGM98}). Its first non-zero eigenvalue is denoted by $\lambda_1 = \lambda_1(\Orb)$. 


We expect that Gromov's complicated fiber inequality extends to the hyperbolic $3$-orbifolds but we shall not check the details here. With this inequality at hand one could prove another version of Theorem~\ref{thm_GG} which does not appeal to a manifold cover of $\Orb$ and has $\lambda_1(\Orb)$ in the place of $\lambda_1(M)$. The generalized Cheeger's inequality established below can then be applied to rewrite the bound in terms of $h(\Orb)$, and thus obtain another analogue of the Gromov--Guth inequality. These results can be also useful for attacking Problem~\ref{prob_3.2} about the higher dimensional version of inequality~\eqref{sec1:eq1}. 

\begin{theorem} \label{thm_Cheeger-Buser}
Let $\Orb$ be closed hyperbolic $k$-orbifold. Then the isoperimetric constant $h$ and the first eigenvalue $\lambda_1$ of the Laplacian on $\Orb$ satisfy the generalized Cheeger and Buser inequalities
\begin{equation*}
\frac{h^2}{4} \le \lambda_1 \le 2(k-1)h + 10h^2.
\end{equation*}
\end{theorem}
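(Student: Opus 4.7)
The plan is to adapt the classical proofs of Cheeger \cite{Ch70} and Buser \cite{Bus82} directly to the orbifold setting, rather than lifting to a manifold cover; lifting does not work cleanly, because both $h$ and $\lambda_1$ decrease under passage to a cover, so the manifold inequalities run in the wrong direction. The enabling observation is that the singular set $\Sigma \subset \Orb$ has codimension at least $2$ and is therefore null for both the $k$-dimensional Riemannian measure and the $(k{-}1)$-dimensional Hausdorff measure. Consequently all the analytical tools behind the classical arguments---elliptic regularity for the self-adjoint extension $-\tilde{\Delta}$, the co-area formula, integration by parts, and local volume comparison for small geodesic balls---transfer to $\Orb$ essentially verbatim, either directly on the smooth locus $\Orb \setminus \Sigma$ or by working with $\Gamma$-invariant objects on $\Hy^k$.

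For the lower bound, I would take a smooth first eigenfunction $f$ of $-\tilde{\Delta}$ on $\Orb \setminus \Sigma$, whose existence follows from the spectral theory of \cite[Section~4.1]{EGM98}. After possibly replacing $f$ by $-f$, one has $\vol_k(\{f > 0\}) \leq \tfrac12 \vol_k(\Orb)$; let $f_+ = \max(f,0)$. Green's identity, applied on $\{f>0\}$ where $f$ vanishes on the boundary, gives $\int_{\Orb} |\nabla f_+|^2 = \lambda_1 \int_{\Orb} f_+^2$. The co-area formula applied to $f_+^2$, combined with the defining property of $h$ on each superlevel set (which sits in $\{f>0\}$ and hence has volume at most $\tfrac12 \vol_k(\Orb)$), yields $\int_{\Orb} |\nabla(f_+^2)| \geq h \int_{\Orb} f_+^2$. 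Combining this with the Cauchy--Schwarz estimate $\int_{\Orb} |\nabla(f_+^2)| = 2 \int_{\Orb} f_+ |\nabla f_+| \leq 2 \bigl(\int_{\Orb} f_+^2\bigr)^{1/2} \bigl(\int_{\Orb} |\nabla f_+|^2\bigr)^{1/2}$ and squaring gives $h^2 \leq 4\lambda_1$.

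For the upper bound I would reproduce Buser's test-function construction from \cite{Bus82}. Starting from a subset $U \subset \Orb$ nearly realizing the Cheeger constant, Buser builds a mean-zero trial function by cutting off a regularized signed distance to $\partial U$ and controls its Rayleigh quotient via a tubular neighborhood volume estimate that rests on a lower bound for Ricci curvature. Since $\Orb$ has constant sectional curvature $-1$ and hence Ricci curvature $-(k-1)$, and since small metric balls in $\Orb$ are quotients of hyperbolic balls in $\Hy^k$ by finite isotropy groups, the required volume comparison inequalities hold on $\Orb$ with the same constants as on $\Hy^k$; feeding them into Buser's argument recovers $\lambda_1 \leq 2(k-1)h + 10h^2$. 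The main technical obstacle is tracking Buser's tubular estimates when the distance spheres to $\partial U$ graze $\Sigma$; this is handled by carrying out the local comparisons upstairs on $\Hy^k$ and pushing back down, using that both the orbifold measure and the $(k-1)$-Hausdorff measure ignore the measure-zero singular locus.
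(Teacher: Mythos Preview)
Your proposal is correct and, for the Cheeger direction, is essentially the same argument as the paper's: both run the coarea formula on $f_+^2$ (the paper follows Ledoux and works with a median rather than a nodal domain, but this is a cosmetic difference) and then apply Cauchy--Schwarz together with the variational characterization of $\lambda_1$.

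For the Buser direction the two proofs diverge. You reproduce Buser's original construction from \cite{Bus82}: build a trial function from a smoothed signed distance to $\partial U$ and control its Rayleigh quotient via tube-volume comparison coming from the Ricci lower bound, handling the singular locus by lifting the local estimates to $\Hy^k$. The paper instead follows Ledoux \cite{Led94}, whose argument is purely analytic: it rests on the Li--Yau heat-kernel inequality and Varopoulos' results, and the only orbifold-specific input needed is that $\chi_U$ can be $L^1$-approximated by smooth functions and that $C^\infty(\Orb)$ is dense in the domain of $-\tilde\Delta$. The trade-off is that Ledoux's route sidesteps the geometric tube estimates near $\Sigma$ that you flag as the main technical obstacle, at the cost of invoking heavier analytic machinery; your route is more self-contained geometrically but requires the careful local-lifting argument you outline.
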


\begin{proof} The argument is based on an observation that the known analytic proofs of the Cheeger and Buser inequalities \cite{Yau75, Led94} can be generalized to orbifolds. We shall follow Ledoux's paper \cite{Led94} and indicate the necessary modifications.

We shall use the fact that the space of $C^{\infty}$ functions on $\Orb$ is dense in $L^2(\Orb)$ (see \cite[Section~4.1]{EGM98}). Let $\mu$ denote the normalized Riemannian measure on $\Orb$.

For Cheeger's inequality we begin the argument as in \cite{Led94}, noting that the definition of $h$ together with the coarea formula imply
\begin{equation}\label{sec4:eq1}
 h \int_0^\infty \min\big( \mu(g \ge s), 1-\mu(g \ge s)\big) ds \le \int |\nabla g| d\mu
\end{equation}
for every positive smooth $g$ on $\Orb$. Given a smooth function $f$ on $\Orb$, let $m$ be a median of $f$ for $\mu$, i.e. $\mu(f \ge m) \ge \frac12$ and $\mu(f \le m) \ge \frac12$, and let $f^+ = \max(f-m, 0)$, $f^- = - \min(f-m, 0)$. Applying \eqref{sec4:eq1} to $g = (f^+)^2$ and $g = (f^-)^2$ together with integration by parts and followed by the Cauchy-Schwarz inequality, we get
$$ \frac{h^2}{4} \int |f-m|^2 d\mu \le \int |\nabla f|^2 d\mu.$$ 
Since the mean $\int f^2 d\mu$ for $f$ with $\int f d\mu = 0$ minimizes $\int |f-c|^2 d\mu$, $c\in \R$, we obtain  
$$ \frac{h^2}{4} \le \frac{\int |\nabla f|^2 d\mu}{\int f^2 d\mu}$$ 
for any smooth $f$ on $\Orb$ with $\int f d\mu = 0$. We now recall that the Rayleigh–-Ritz principle for semi-bounded below operators and the density of $C^{\infty}$ functions in $L^2(\Orb)$ imply that $\lambda_1(\Orb)$ is equal to the infimum of the right hand side in the last inequality (see \cite[Proof of Lemma~6.1]{ABSW} for more details). This proves the first inequality of the theorem.

The proof of Buser's inequality follows similarly. Our assumptions on $U$ and $\partial U$ imply that its characteristic function $\chi_U$ can be approximated in the $L^1$-norm by smooth functions. This is the only modification required, the rest of the proof follows the method of Ledoux which is based on the Li--Yau inequality and results of Varopoulos (see \cite[Proof of Theorem~1]{Led94}). For the characterization of $\lambda_1$ here again we make use of the density of the smooth functions in the domain of $-\tilde{\Delta}$.
\end{proof}

As in the smooth case, it is possible to generalize the inequalities in Theorem~\ref{thm_Cheeger-Buser} to the case of complete non-compact hyperbolic $k$-orbifolds. The details are entirely similar.

\section{A lower bound for hyperbolic volume} \label{sec5}

We now turn back to the three-dimensional case. The goal of this section is to give a lower bound for the volume of a closed arithmetic hyperbolic $3$-orbifold in terms of its  triangulations. In general such a bound does not exist because there are examples of closed hyperbolic $3$-manifolds of bounded volume and arbitrarily large complexity. We can achieve a desired result by exploiting arithmeticity which allows us to effectively bound the injectivity radius and the orders of singularities in terms of volume.

We will call a triangulation of $|\Orb|$ by a \emph{good triangulation} of $\Orb$ if any $k$-dimensional submanifold of its singular set is contained in the $k$-skeleton of the triangulation (for $k = 0, \ldots, \dim(\Sigma)$).

We shall first consider the problem of bounding the orbifold volume under the assumptions that the injectivity radius $r_{inj} \ge r > 0$ and that for any finite subgroup $G < \Gamma = \pi_1(\Orb)$ we have $|G| \le m$. A similar problem was studied before by Gelander and Samet, who were considering the volume of a thick part of an orbifold and used the Margulis constant as a uniform lower bound for the injectivity radius (see \cite[Section~2]{BGLS} and \cite{Sam13} for the details). In particular, Samet's Theorem~4.2 specialized to the hyperbolic $3$-orbifolds is very close to what we need except that we require an explicit control over the constants and we want to obtain not just a simplicial complex homotopy equivalent to $\Orb$ but an actual triangulation of the orbifold.

Let us first recall some definitions. Let $\Orb = X/\Gamma$ with $X = \Hy^3$ be a compact orientable hyperbolic $3$-orbifold with the singular set $\Sigma$ and $\pi: X \to \Orb$ is the canonical projection. We shall call the isometries in $\Gamma$ which act on $X$ without fixed points \emph{hyperbolic}. For a hyperbolic isometry $\gamma$ its \emph{displacement at $x \in X$} is defined by $\ell(\gamma, x) = \dist(\gamma x, x)$ and the \emph{displacement of $\gamma$} (also called its \emph{translation length} $\ell(\gamma)$) is given by the displacement of $\gamma$ at the points of its axis (cf. \cite[Section~12.1]{MR03}). We shall define the orbifold \emph{injectivity radius} by $r_{inj}(\Orb) = \inf \{\frac12 \ell(\gamma)\}$, where the infimum is taken over all hyperbolic elements $\gamma \in \Gamma$. It is equal to the half of the smallest length of a closed geodesic in $\Orb$. When $\Orb$ is a manifold this definition is equivalent to the usual definition of the injectivity radius as the supremum of $r$ such that any point $p\in\Orb$ admits an embedded ball $B(p, r) \subset \Orb$. This is not the case in general as the points around the singular set only admit so called folded balls, but this fact will not cause us any serious problems.


\begin{prop}\label{prop_vol_hyp}
Let $\Orb$ be a closed orientable hyperbolic $3$-orbifold of volume $V_{hyp}$ with $r_{inj} \ge r > 0$ and  $|G| \le m$ for any finite subgroup $G < \Gamma$. Then $\Orb$ has a good triangulation with $N$ simplices for
$$N \le \frac{mV_{hyp}}{v_{\epsilon/2}}(D-1),$$
where $\epsilon = \frac{1}{10}\min\{r, \mu_3\}$, $\mu_3$ is the Margulis constant of $\Hy^3$, $v_{\epsilon/2}$ is the volume of a ball of radius $\epsilon/2$ in $\Hy^3$, and the degree $D$ of the vertices of the triangulation is bounded above by $\frac{v_{5\epsilon/2}}{v_{\epsilon/2}}$.
\end{prop}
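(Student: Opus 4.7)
The plan is to construct $\cT$ as a Delaunay-type triangulation coming from a $\Gamma$-equivariant $\epsilon$-separated net in $\Hy^3$ chosen compatibly with the singular filtration of $\Orb$, with $\epsilon=\tfrac{1}{10}\min\{r,\mu_3\}$, and then to bound the vertex count and the maximal simplex count by hyperbolic volume comparison. Concretely, given a vertex count $V$ and a vertex-degree bound $D$, a standard Euler-characteristic argument on the link of each vertex in a triangulation of the $3$-manifold $|\Orb|$ gives at most $2(D-2)$ tetrahedra per vertex, hence $N\le V(D-1)$; so the proposition will follow once I establish $V\le mV_{hyp}/v_{\epsilon/2}$ and $D\le v_{5\epsilon/2}/v_{\epsilon/2}$.

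First I would build the net $\tilde S\subset\Hy^3$ in stages, following the dimension filtration of $\pi^{-1}(\Sigma)$: include every preimage of a $0$-stratum of $\Sigma$ (the corner points of the singular graph); extend $\Gamma$-equivariantly to a maximal $\epsilon$-separated subset of the preimage of the $1$-stratum (the edges of the graph); and finally complete to a maximal $\epsilon$-separated subset of $\Hy^3$. This staged placement forces the $l$-stratum of $\Sigma$ to lie in the $l$-skeleton of the triangulation built from $\tilde S$, which is exactly the good-triangulation condition.

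Next, form the Voronoi diagram of $\tilde S$ in $\Hy^3$ and take its Delaunay dual $\tilde\cT$; by $\epsilon$-density its cells have diameter at most $2\epsilon$. Since $2\epsilon\le r_{inj}/5$, no hyperbolic element of $\Gamma$ identifies two distinct cells, and since $2\epsilon\le\mu_3/5$, the Margulis lemma confines the local action of $\Gamma$ near each cell to a finite rotation stabilizer of order at most $m$. After a small generic $\Gamma$-equivariant perturbation of $\tilde S$ to avoid degenerate circumspheres, $\tilde\cT$ is a genuine simplicial complex that descends to a triangulation $\cT$ of $|\Orb|$, and by the staged construction $\cT$ is good. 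The packing estimates then give the required bounds. On the one hand, the balls $B(x,\epsilon/2)$ for $x$ in a set of $\Gamma$-orbit representatives in $\tilde S$ project to disjoint open subsets of $\Orb$, each of volume at least $v_{\epsilon/2}/|\mathrm{Stab}(x)|\ge v_{\epsilon/2}/m$, yielding $V\le mV_{hyp}/v_{\epsilon/2}$. On the other hand, a Delaunay neighbor of $x\in\tilde S$ lies within distance $2\epsilon$ of $x$, so the pairwise disjoint $\epsilon/2$-balls of all such neighbors fit inside $B(x,5\epsilon/2)$, giving $D\le v_{5\epsilon/2}/v_{\epsilon/2}$.

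The delicate step is the Delaunay descent, where one has to check that the Voronoi--Delaunay construction produces an honest simplicial triangulation of the underlying space compatible with the stratified placement of net points on the singular graph, in the presence of possibly non-trivial finite stabilizers. This is where the specific thresholds in $\epsilon=\tfrac{1}{10}\min\{r,\mu_3\}$ are essentially used: the factor $r$ excludes loxodromic identifications among the cells, while the factor $\mu_3$ confines the finite-order action inside any ball of radius $2\epsilon$ to a local rotation group, so that a generic $\Gamma$-equivariant perturbation suffices to obtain a bona fide simplicial structure that descends cleanly to $|\Orb|$.
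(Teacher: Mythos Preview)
Your proposal is correct and follows essentially the same approach as the paper: a staged $\epsilon$-net adapted to the singular filtration, its Delaunay triangulation, a packing argument for the vertex count and degree, and a Dehn--Sommerville/link-Euler argument for the simplex count. The only cosmetic difference is that you work $\Gamma$-equivariantly in $\Hy^3$ while the paper works directly in $\Orb$ with folded balls; the paper also notes explicitly that the vertices of $\Sigma$ (which cannot be perturbed) are already $>2\epsilon$-separated, so the genericity perturbation can be confined to the remaining net points---a point implicit in your ``generic $\Gamma$-equivariant perturbation'' but worth stating.
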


\begin{proof}
Since $\epsilon < \mu_3$,  the distance between any two vertices of the singular set is bigger than $\epsilon$. Indeed, otherwise the subgroup of $\Gamma$ generated by the stabilizers of the vertices would be abelian, which is impossible. A similar argument implies that for any two different points $x,y\in \Sigma$, if $\dist(x,y) < \epsilon$ then $\dist(x,y) = \dist_\Sigma(x,y)$ or there exists a vertex in $\Sigma$ which is $\epsilon$-close to both $x$ and $y$ (we shall call it by the property that \emph{the singular set does not  come too close to itself}).

Let us define a maximal $\epsilon$-separated set $P$ of points $p_1, p_2, \ldots $ in $\Orb$ which contains all the vertices of $\Sigma$ as well as a maximal $\epsilon$-separated set of points in $\Sigma$ (in general such a set can be constructed using induction by the dimension of the submanifolds in $\Sigma$). A collection of open (folded) balls $\mathcal{B}  = \{B(p_i, \epsilon)\}$ defines a cover of $\Orb$, moreover, these balls and their intersections are contractible (this is obvious in dimension $3$, for a general proof see \cite[Propositions~4.9, 4.10]{Sam13}). Therefore the nerve of the covering defines a simplicial complex homotopy equivalent to $|\Orb|$. Moreover, the Delaunay triangulation of $P$ is a geodesic triangulation of $\Orb$ which satisfies the properties of a good triangulation. In order to construct the triangulation first note that the assumption that $\epsilon\le\frac{r_{inj}}{10}$ allows us to work on the universal covering $\Hy^3$. The vertices of the singular set are among the vertices of the triangulation and the edges of $\Sigma$ are geodesic and covered by the $\epsilon$-net of vertices so they are contained in the $1$-skeleton. We remark that the vertices of $\Sigma$ are $>2\epsilon$ separated, hence any sphere containing two such vertices will contain some other point from $P$ in its interior, and all other points in $P$ can be moved slightly, if necessary, in order to assume that the set $P$ is generic and hence the Delaunay triangulation is well defined. (We refer to \cite{Bre09} for a more thorough discussion of Delaunay triangulations of hyperbolic manifolds.)

Now consider smaller balls $B(p_i, \epsilon/2)$ in $\Orb$. These balls are disjoint and because the singular set cannot come too close to itself, each of them is isometric to a quotient of an $\epsilon/2$-ball in $\Hy^3$ by a finite subgroup of $\Gamma$ (which could be trivial, of course). Hence $\vol(B(p_i, \epsilon/2)) \ge \frac{v_{\epsilon/2}}{m}$ which implies $|\mathcal{B}| \le \frac{mV_{hyp}}{v_{\epsilon/2}},$ the upper bound for the number of vertices in the triangulation. 

To bound the degree of the vertices note that in order for two vertices to be joined by an edge in the Delaunay triangulation they have to be at the distance less than $2\epsilon$ from each other. So assume that a ball $B(p_i, \epsilon)$ intersects $D$ other balls in $\mathcal{B}$. Then all of them are contained in $B(p_i, 3\epsilon)$, and the corresponding $\epsilon/2$-balls are all in $B(p_i, 5\epsilon/2)$. It follows that the number of balls in $\mathcal{B}$ which intersect $B(p_i, \epsilon)$ is at most $\frac{v_{5\epsilon/2}}{v_{\epsilon/2}}$, which gives the upper bound for the vertex degree.

Finally, applying the generalized Dehn--Sommerville equations for the closed triangulated manifold $|\Orb|$, we obtain that the number of simplices in the triangulation is bounded above by $|\mathcal{B}|(D-1)$.
\end{proof}

\begin{rmk}
 If we assume the Short Geodesic Conjecture for arithmetic hyperbolic $3$-orbifolds, which says that there exists a uniform positive lower bound for their injectivity radii (cf. \cite[Section~12.3]{MR03}), then the bound in the proposition restricted to arithmetic orbifolds $\Orb$ would not depend on $r$ but the dependence on $m$ would still remain.
\end{rmk}

We now recall some properties of arithmetic groups which will allow us to relate the orders of singularities and the injectivity radius of the quotient orbifold to its volume.

Assume that $\gamma \in \Gamma$ has the smallest displacement and let $P(x)$ denote its minimal polynomial. Then we have $r_{inj}(\Orb) = \frac12 \ell(\gamma) = \frac12 \log M(P)$ or $\log M(P)$ according to whether $\mathrm{tr}(\gamma)$ is complex or real, where $M(P) = \prod_{i=1}^d \mathrm{max}(1, |\theta_i|)$, $\theta_1$, \dots, $\theta_d$ are the roots of $P(x)$, is the polynomial \emph{Mahler measure} (for more details see \cite[Chapter~12]{MR03}).

The celebrated Lehmer's problem says that the Mahler measures of non-cyclotomic polynomials are expected to be uniformly bounded away from $1$. A special case of this conjecture is known to be equivalent to the Short Geodesic Conjecture for arithmetic hyperbolic $3$-orbifolds. These conjectures have attracted a lot of interest but still remain wide open. Nevertheless, there are some quantitative number-theoretic results towards Lehmer's problem which can be very useful.

Let us recall a well known Dobrowolski's bound for the Mahler measure:
\begin{equation}\label{sec5:eq1}
\log M(P) \ge c_1\left(\frac{\log\log d}{\log d}\right)^3,
\end{equation}
where $d$ is the degree of the polynomial $P$ and $c_1>0$ is an explicit constant.

Next we claim that the field of definition $k$ of the arithmetic group $\Gamma$ satisfies
\begin{equation*}
d \le 2\mathrm{deg}(k).
\end{equation*}
This indeed follows immediately from the results discussed in~\cite[Chapter~12]{MR03}.

We can relate these quantities to the volume by using an important inequality relating the volume of an arithmetic hyperbolic $3$-orbifold and the degree of its field of definition:
\begin{equation}\label{sec5:eq2}
\mathrm{deg}(k) \le c_2\log\vol(\Orb) + c_3.
\end{equation}
This result was first proved by Chinburg and Friedman \cite{CF86}, in a form stated here it can be found in~\cite{BGLS}. We note that the constants in inequalities \eqref{sec5:eq1}--\eqref{sec5:eq2} can be computed explicitly.

For sufficiently large $x$ the function $\frac{\log x}{x}$ is monotonically decreasing, hence for sufficiently large volume we obtain
\begin{equation}\label{sec5:eq3}
r_{inj}(\Orb) \ge \frac{c_1}2\left(\frac{\log\log\log \vol(\Orb)^c}{\log\log \vol(\Orb)^c}\right)^3.
\end{equation}

We note that this is a very slowly decreasing function. For more information about this argument and some related results we refer to \cite{Bel10}.

It remains to bound the order of finite subgroups $G < \Gamma$. 
Finite subgroups of a Kleinian group are the discrete subgroups of $\mathrm{SO}(3)$, the group of orientation preserving isometries of the $2$-sphere. These are the cyclic, dihedral, tetrahedral, octahedral and icosahedral symmetry groups. Their orders are bounded above by a constant multiple of the order of the maximal cyclic subgroup. From the other hand, an arithmetic Kleinian group $\Gamma$ defined over a field $k$ can contain a cyclic subgroup of order $n$ only if $\cos(2\pi/n) \in k$. Hence we have $\phi(n) \le \deg(k)$, where the Euler function $\phi$ satisfies the well known inequality $\sqrt{n}/2 \le \phi(n)$. Bringing these observations together, we obtain that $m = |G|$ satisfies the inequalities
\begin{equation}\label{sec5:eq4}
m \le c_4\deg(k)^{2} \le c_5(\log\vol(\Orb))^{2}.
\end{equation}

Finally, for sufficiently small positive $\epsilon$ we have

\begin{equation}\label{sec5:eq5}
\frac{v_{5\epsilon/2}}{v_{\epsilon/2}} = \frac{\pi(\sinh(5\epsilon)-5\epsilon)}{\pi(\sinh(\epsilon)-\epsilon)} < 125.1.
\end{equation}
This gives an upper bound for the degree of the vertices of the triangulation. This bound can be significantly improved by a more careful consideration but we shall not pursue it here.

\medskip

From Proposition~\ref{prop_vol_hyp} and inequalities \eqref{sec5:eq3}--\eqref{sec5:eq5} we deduce

\begin{theorem} \label{thm_vol_hyp}
For any $\epsilon > 0$, there is a constant $V_0 = V_0(\epsilon)$ such that any closed orientable arithmetic hyperbolic $3$-orbifold of volume $V_{hyp} \ge V_0$ has a good triangulation with at most $V_{hyp}^{1+\epsilon}$ simplices and vertex degree bounded above by an absolute constant.
\end{theorem}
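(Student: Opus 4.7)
The plan is to combine Proposition~\ref{prop_vol_hyp} with the arithmetic estimates \eqref{sec5:eq3}--\eqref{sec5:eq5}. Given a closed orientable arithmetic hyperbolic $3$-orbifold $\Orb$ of volume $V_{hyp}$, I set $\delta = \tfrac{1}{10}\min\{r_{inj}(\Orb), \mu_3\}$ and apply Proposition~\ref{prop_vol_hyp} with this choice of geometric cutoff. This produces a good triangulation with
$$ N \le \frac{m\, V_{hyp}}{v_{\delta/2}}(D-1),$$
where $m$ bounds the order of any finite subgroup of $\Gamma = \pi_1(\Orb)$ and $D$ is the maximum vertex degree. By \eqref{sec5:eq5}, once $V_{hyp}$ is large enough that $\delta$ is small we have $D \le 125.1$, which already establishes the absolute-constant bound on the vertex degree asserted in the theorem.

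The remaining task is to show that the prefactor $m(D-1)/v_{\delta/2}$ grows only \emph{polylogarithmically} in $V_{hyp}$. For the numerator, inequality \eqref{sec5:eq4} gives $m \le c_5(\log V_{hyp})^{2}$. For the denominator, I use that for small $r$ the hyperbolic ball volume satisfies $v_r = \pi(\sinh(2r)-2r) \asymp r^{3}$, so $v_{\delta/2} \ge c\, \delta^{3} \ge c'\, r_{inj}(\Orb)^{3}$; the lower bound \eqref{sec5:eq3}, which descends from Dobrowolski's inequality and the Chinburg--Friedman volume bound, then shows that $v_{\delta/2}^{-1}$ is dominated by a fixed power of iterated logarithms of $V_{hyp}$.

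Putting these estimates into the bound from Proposition~\ref{prop_vol_hyp} yields
$$ N \le V_{hyp} \cdot \Phi(V_{hyp}),$$
where $\Phi$ is a product of powers of $\log V_{hyp}$ and its iterates. Since any such polylogarithmic $\Phi$ is eventually dominated by $V_{hyp}^{\epsilon}$, for any $\epsilon > 0$ one may choose $V_0(\epsilon)$ so that $\Phi(V_{hyp}) \le V_{hyp}^{\epsilon}$ whenever $V_{hyp} \ge V_0$, which gives $N \le V_{hyp}^{1+\epsilon}$. The main obstacle in this assembly is the slow Dobrowolski-type bound in \eqref{sec5:eq3}; it is precisely what forces the loss of an arbitrarily small $\epsilon$ in the exponent, and an unconditional improvement to a linear estimate $N \le C \cdot V_{hyp}$ would require progress toward the Short Geodesic Conjecture for arithmetic hyperbolic $3$-orbifolds.
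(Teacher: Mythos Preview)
Your proposal is correct and follows exactly the approach the paper intends: the paper simply states that Theorem~\ref{thm_vol_hyp} follows from Proposition~\ref{prop_vol_hyp} together with inequalities \eqref{sec5:eq3}--\eqref{sec5:eq5}, and you have supplied precisely those details. The only minor imprecision is the chain $v_{\delta/2}\ge c\,\delta^{3}\ge c'\,r_{inj}(\Orb)^{3}$, which is not literally true when $r_{inj}>\mu_3$; but in that regime $\delta=\mu_3/10$ is an absolute constant and the bound on $v_{\delta/2}^{-1}$ is even better, so the conclusion is unaffected.
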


\section{An upper bound for tube volume} \label{sec6}
The goal of this section is to give an upper bound for the tube volume of an embedding of a hyperbolic $3$-orbifold $\Orb$ assuming that the underlying space of $\Orb$ is an elliptic $3$-manifold and that $\Orb$ admits a good triangulation of bounded degree with at most $N$ simplices.

An elliptic $3$-manifold can be presented as a quotient of $\Sph^3$ by a free action of a finite group $G$. These manifolds and their groups were classified by Hopf and Seifert--Threlfall \cite{H26, ST31, ST33}. In particular, the group $G$ is either cyclic, or is a central extension of a dihedral, tetrahedral, octahedral, or icosahedral group by a cyclic group of even order. The corresponding manifolds are called lens spaces, prism manifolds, and then tetrahedral, octahedral and icosahedral manifolds, respectively.

\begin{theorem} \label{thm_vol_tube}
Suppose that the underlying space of an orbifold $\Orb$ is an elliptic \mbox{$3$-manifold} and that $\Orb$ has a good triangulation $\cT$ into $N$ simplices with vertex degree bounded above by $D$. Then for sufficiently large $n$, the orbifold $\Orb$ admits an embedding into $\R^n$ with thickness $1$ and tube volume $V_{1} \le c_{D, n}N$.
\end{theorem}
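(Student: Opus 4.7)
The plan is to construct the required embedding $I_1 \colon \cT^1 \to \R^n$ in two stages. First, I would fix a ``template'' piecewise linear embedding of the underlying elliptic 3-manifold $|\Orb| = \Sph^3/G$ whose complexity depends only on $G$; then I would route the 1-skeleton $\cT^1$ of the given triangulation onto a tubular neighborhood of this template, using the large ambient dimension $n$ to guarantee strong combinatorial thickness.

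For the template, I would use that by the classification recalled in the paragraph before the theorem, $|\Orb|$ is Seifert fibered over a spherical 2-orbifold $B$ with a bounded number of singular fibers, according as $G$ is cyclic, dihedral, tetrahedral, octahedral, or icosahedral. The base $B$ embeds piecewise linearly into a compact region of $\R^{n-2}$, and thickening by two extra coordinates to accommodate the circle fiber yields a piecewise linear image of $|\Orb|$ in $\R^n$ of total volume bounded by a constant depending only on the type of $G$. In the lens space case this is just the standard Heegaard decomposition into two solid tori glued along their boundary torus; the remaining cases reduce to the cyclic case by passing to a bounded-index cover.

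Next I would route $\cT^1$ into this template. The key property to exploit is the non-expansion property of bounded degree triangulations of elliptic 3-manifolds alluded to in the introduction: because $|\Orb|$ is Seifert fibered with 2-dimensional base, any bounded degree triangulation admits balanced separators of sublinear size and hence admits a recursive layout of total length linear in the number of vertices. I would recursively split $\cT^1$ along such separators, placing each recursive block inside its own coordinate slab within the tubular neighborhood of the template, with slabs at distinct levels of the recursion assigned to mutually orthogonal directions of $\R^n$. The bounded vertex degree $D$ controls the branching of the recursion, while the ambient dimension $n$ provides the codimension needed to keep non-adjacent vertices and edges at pairwise Euclidean distance greater than 1.

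The main obstacle is maintaining strong combinatorial thickness \emph{globally} while keeping the total tube volume linear in $N$: simplices of $\cT$ that do not share a face must never come within Euclidean distance 1 of each other, even after all levels of the recursion are combined into a single embedding. The resolution is to let the tubular neighborhood of the template expand in the extra coordinates at a rate linear in $N$ while keeping its cross-sectional volume bounded by a constant $c_{D,n}$. Summing over the $O(N)$ leaves of the recursion, each contributing tube volume $O_{D,n}(1)$, then gives the required bound $V_1 \le c_{D,n} N$. This takes essential advantage of ellipticity, since it is precisely the small-genus Seifert structure on $|\Orb|$ that makes both the template and the separator decomposition available.
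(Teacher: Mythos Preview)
Your proposal and the paper's sketch share the correct headline idea --- the $1$-skeleton of a bounded degree triangulation of an elliptic $3$-manifold is a non-expander, and non-expanders embed with linear tube volume --- but your route to it has a genuine gap, and the paper's mechanism is different and more direct.

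The paper does not pass through a Seifert or Heegaard template of $|\Orb|$ at all. Instead, it identifies a subcomplex $X_0$ \emph{inside} $\cT^1$ that carries $\pi_1(|\Orb|)$: the basepoint when $|\Orb|=\Sph^3$, a single cycle of length $\lesssim N$ for a lens space, and a bouquet of a bounded number of such cycles in the remaining cases. This $X_0$ embeds in some $\R^{n_1}$ with strong combinatorial thickness $1$ and tube volume $\sim N$ by the obvious picture (a round circle per cycle, one pair of coordinates per cycle). The extension from $X_0$ to all of $\cT^1$ is then handled by the $L_D$-exhaustion machinery of Gromov \cite[Section~4.4]{Gr10}: because $X_0$ already carries the fundamental group and $\cT$ has bounded degree, $\cT$ can be built up from $X_0$ by attaching a bounded number of cells at each step, and this exhaustion is exactly what lets one enlarge the thick embedding step by step without blowing up the tube volume beyond linear.

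The gap in your argument is the sentence ``because $|\Orb|$ is Seifert fibered with $2$-dimensional base, any bounded degree triangulation admits balanced separators of sublinear size.'' The Seifert structure is smooth/topological and has no a priori relation to the given combinatorial triangulation $\cT$; there is no Lipton--Tarjan theorem for $3$-manifold triangulations that reads separator size off the base orbifold. What actually forces non-expansion is that every $1$-cycle of $\cT^1$ not coming from $X_0$ bounds in $\cT^2$ --- a purely combinatorial consequence of $X_0$ carrying $\pi_1$ --- and this is precisely what the $L_D$-exhaustion encodes. Your template construction is also doing no work: the definition of thickness in this paper refers only to the embedding of $\cT^1$, so routing $\cT^1$ onto a fixed-volume image of $|\Orb|$ and then ``letting the tubular neighborhood expand linearly in $N$'' is circular --- the whole question is how to lay out $\cT^1$ so that the expansion stays linear, and the template does not tell you that.
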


\begin{proof}[Sketch of a proof:]
 
%
%
%

Let $X_0$ be a connected based subcomplex of $\cT^1$ which carries the fundamental group of $|\Orb|$: for the sphere it is the base point, for a lens space it is a cycle of length $\lesssim N$, and for the other types it is a bouquet of $\le c_1 = \mathrm{const}$ cycles of length $\lesssim N$. The subcomplex $X_0$ has two properties that we would like to note:
\begin{itemize}
 \item[(1)] The complex $\cT$ admits an $L_D$-exhaustion with respect to $X_0$ in the sense of \cite[Section~4.4]{Gr10};
 \item[(2)] For any $n_1\ge 2c_1$, there is an embedding of $X_0$ in $\R^{n_1}$ with strong combinatorial thickness $1$ and tube volume $\sim N$ (we can embed a cycle to a circle in $\R^2$ and use the extra dimensions for any extra cycle whose total number is bounded by $c_1$). 
\end{itemize}

We can now start with the embedding from (2) and extend it to an embedding of $\cT^1$ to $\R^n$ using property (1) together and the boundedness of degree of $\cT^1$. The dimension $n$ is bounded below by a constant depending on $n_1$ and $D$. 

The details of this argument will be given in a subsequent version.
\end{proof}

\section{Completion of the proofs of the main results} \label{sec7}

Let $\Gamma$ be a maximal arithmetic Kleinian group as in Section~\ref{sec2} so the underlying space of the orbifold $\Orb = \Hy^3/\Gamma$ is an elliptic $3$-manifold.

Let $\epsilon = 0.01$ and $V_0 = V_0(\epsilon)$ is the constant from Theorem~\ref{thm_vol_hyp}. If $V_{hyp}(\Orb) \ge V_0$, by the theorem $\Orb$ admits a good triangulation with $N \le V_{hyp}^{1+\epsilon}$ simplices and vertex degree bounded by $D$ ($=125$). Now Theorem~\ref{thm_vol_tube} implies that $\Orb$ admits an embedding into $\R^n$ with retraction thickness $1-\epsilon$ and tube volume 
$$ V_{1-\epsilon} \le c_{D,n} N \le c_{D,n} V_{hyp}^{1+\epsilon}.$$

We would like to apply the generalized Gromov--Guth inequality from Theorem~\ref{thm_GG}, which requires also the information about the spectral gap $\lambda_1$ of a manifold cover of $\Orb$. We recall that the maximal arithmetic Kleinian groups are congruence (cf. \cite{LMR06}), hence $\Gamma$ contains a principal congruence subgroup $\Gamma(I)$ for some ideal $I$ in the ring of integers $R$ of the field of definition of $\Gamma$. By Selberg's lemma, $\Gamma$ has a torsion-free subgroup of finite index of the form $\Gamma(J)$ for some ideal $J\subset R$. It follows that $\Gamma(I\cap J)$ is a torsion-free congruence subgroup of $\Gamma$, and we can take $M = \Hy^3/\Gamma(I\cap J)$. The results towards Selberg's and Ramanujan's conjectures about the spectrum of the Laplacian apply to $M$, and by deep work of Vigneras and Burger--Sarnak we know that $\lambda_1(M) \ge \frac34$ \cite{Vig83, BS91}. 

We are now ready to consider the inequality from Theorem~\ref{thm_GG}:
\begin{align*}
\lambda_1^{\frac{n}{n-1}} V_{hyp}^{\frac{n}{n-1}} & \le c_{n}c_{D,n} V_{hyp}^{1+\epsilon};\\
V_{hyp}^{\frac{n}{n-1} - (1+\epsilon)} & \le c_{n}c_{D,n}\lambda_1^{-\frac{n}{n-1}}.
\end{align*}
By Theorem~\ref{thm_vol_tube}, we can assume that $n = 30$, so $\frac{n}{n-1} - (1+\epsilon) > 0.02$, and we deduce that 
\begin{align*}
V_{hyp} & \le C, 
\end{align*}
for an absolute positive constant $C$ which can be computed explicitly.

Now Borel's finiteness theorem \cite{Bor81} says that there exist only finitely many (up to conjugacy) arithmetic Kleinian groups of bounded covolume $V_{hyp} \le \max(V_0, C)$, so there are only finitely many possibilities for  $\Gamma$. The group $\Gamma$ uniquely determines its maximal subgroup generated by elements of finite order $\Gamma_0$, hence we obtain finiteness of the number of such subgroups. This finishes the proof of Theorems~\ref{thm2.1} and \ref{thm1}. \qed

\medskip

We conclude with another finiteness result discussed in the introduction.

\begin{theorem}\label{thm_2gen}
There are only finitely many conjugacy classes of cocompact $2$-genera\-tor arithmetic Kleinian groups. 
\end{theorem}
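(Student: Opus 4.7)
The plan is to imitate the proof of Corollary~\ref{cor2}, replacing the appeal to [BS11] in the within-class counting step by an orbifold version. First, by [Gil97, Bel02], every cocompact $2$-generator Kleinian group $\Gamma$ sits with index at most $2$ inside a Kleinian group $\widetilde\Gamma$ generated by three involutions. Then $\widetilde\Gamma \in \mathcal{C}$, so Corollary~\ref{cor1} forces the commensurability class of $\Gamma$ to lie in a finite list. It therefore suffices to show that any fixed commensurability class $\mathcal{K}$ of cocompact arithmetic Kleinian groups contains only finitely many conjugacy classes of $2$-generator subgroups.

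The within-class bound would follow the [BS11] strategy (after Agol), adapted from manifolds to $3$-orbifolds. A $2$-generator $\Gamma \in \mathcal{K}$ has orbifold first Betti number $b_1^{\mathrm{orb}}(\Orb) \le 2$, where $\Orb = \Hy^3/\Gamma$, and $2$-generation forces any orbifold fibration of $\Orb$ to have a fiber $2$-orbifold of bounded orbifold Euler characteristic. Using Selberg's lemma, pick a torsion-free normal manifold cover $M \to \Orb$ of finite index. Agol's virtual fibering theorem, applied after a further finite cover, produces a fibration $M \to S^1$; choosing this cover to be deck-equivariant, it descends to an orbifold fibration $\Orb \to S^1$ (possibly after passing to a finite cyclic cover of the base) whose fiber is a $2$-orbifold. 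The orbifold Thurston norm on $H^1(\Orb; \R)$, defined using $|\chi^{\mathrm{orb}}|$ of embedded incompressible $2$-suborbifolds, has only finitely many top-dimensional faces, and on each fibered face only finitely many primitive integer classes have a fiber of bounded complexity. Combining with Mostow rigidity and the fact that $\Orb$ has only finitely many essential $2$-orbifold classes of a given orbifold Euler characteristic, one deduces finitely many conjugacy classes of $2$-generator subgroups within $\mathcal{K}$.

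The main obstacle is making the orbifold Thurston norm theory and the descent of Agol's virtual fibering work cleanly in the presence of cone points: one needs to verify that Agol's theorem can indeed be promoted to yield an orbifold fibration of $\Orb$ itself (rather than just of a manifold cover), and that the $2$-generator constraint on $\Gamma$ translates into an effective upper bound on $|\chi^{\mathrm{orb}}|$ of the fiber $2$-orbifold. A secondary technical point is controlling the commensurator ambiguity: once the topological type of the fibration is pinned down, $\Gamma$ should be determined up to the finite mapping class group of the fiber $2$-orbifold, so that only finitely many $\Gamma$ in $\mathcal{K}$ realize each combinatorial type. Once these orbifold analogues of the Thurston norm and the fibering theorem are in place, the remainder of the [BS11] argument transfers essentially unchanged.
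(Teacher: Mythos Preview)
Your first reduction---embedding a $2$-generator group in a group generated by three involutions and invoking Corollary~\ref{cor1} to reduce to finitely many commensurability classes---matches the paper exactly. The divergence, and the genuine gap, is in the within-class step.

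The paper does not use Agol's virtual fibering theorem or the Thurston norm at all. Instead it follows the geometric-limit strategy of \cite{BS11}: assume an infinite sequence $(\Orb_i)$ of commensurable rank-$2$ orbifolds, pass to manifold covers $M_i$ of uniformly bounded degree, extract a based Gromov--Hausdorff limit $M_\infty \cong \Sigma_M \times \R$ with two degenerate ends, and use Agol's version of the Thurston--Canary covering theorem to deduce that the orbifold limit $\Orb_\infty$ is also a product $\Sigma \times \R$. An orbifold covering theorem then forces all but finitely many $\Orb_i$ to fiber over $\Sph^1$ or $\Sph^1/(z\mapsto\bar z)$ with fiber $\Sigma$, and an orbifold version of the White--Biringer--Souto rank formula gives $\rank(\pi_1(\Orb_i)) = \rank(\pi_1(\Sigma)) + 1 \ge 3$, a contradiction.

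Your route has two structural problems. First, the assertion that ``$2$-generation forces any orbifold fibration of $\Orb$ to have a fiber of bounded $|\chi^{\mathrm{orb}}|$'' is exactly the hard content of the White--Biringer--Souto theorem, and it is \emph{false} without the asymptotic hypothesis that the fibration is geometrically long: mapping tori can have rank much smaller than the rank of the fiber, so rank~$2$ alone gives no a~priori bound on $|\chi(\Sigma)|$. You cannot assume this; it is what the geometric-limit argument is there to prove. Second, your Thurston-norm reasoning (``finitely many fibered faces, finitely many primitive classes of bounded fiber complexity'') is a statement about a \emph{single} orbifold $\Orb$. It says nothing about why there should be only finitely many distinct $\Orb$ in the commensurability class $\mathcal{K}$; even if each $\Orb$ had a unique fibration with bounded fiber, the class $\mathcal{K}$ could a~priori contain infinitely many such $\Orb$. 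The step ``combining with Mostow rigidity \ldots\ one deduces finitely many $\Gamma$ in $\mathcal{K}$'' is therefore a non sequitur. The descent of virtual fibering to an equivariant fibration of $\Orb$ itself is a further difficulty you flag but do not resolve, and in fact the paper's argument sidesteps it entirely by producing the fibration from the structure of the geometric limit rather than from virtual specialness.
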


\begin{proof}[Sketch of a proof:]
As in the proof of Corollary~\ref{cor2}, we recall that any $2$-generator Kleinian group is contained with index at most $2$ in a Kleinian group generated by three half-turns and hence by Corollary~\ref{cor1} there are only finitely many commensurability classes of such cocompact arithmetic subgroups. It remains to show that any commensurability class can contain only finitely many $2$-generator groups. This can be done by adapting the methods from \cite{BS11} to orbifolds. 

Consider a sequence of commensurable closed arithmetic $3$-orbifolds $\Orb_1$, $\Orb_2$, \ldots with $\rank(\pi_1(\Orb_i)) = 2$. Their orbifold injectivity radii (defined as in Section~\ref{sec5}) are uniformly bounded below. By Selberg's lemma we have a sequence of covering manifolds $M_i \to \Orb_i$, moreover, the degrees of the covers are uniformly bounded by a constant so after passing to a subsequence we can assume that they are all equal to $d$. By \cite[Proposition 6.1]{BS11}, we can assume that $(M_i)$ converges in the based Gromov--Hausdorff topology to a manifold $M_\infty$ that is homeomorphic to $\Sigma_M\times\R$ and has two degenerate ends. By the construction, the corresponding pointed Gromov--Hausdorff limit $\Orb_\infty$ of the sequence $(\Orb_i)$ is $d$-fold covered by $M_\infty$, hence by Agol's version of the Thurston--Canary covering theorem \cite[Lemma~14.3]{Ag04} the orbifold $\Orb_\infty$ is also a product $\Sigma\times\R$, where $\Sigma$ is a closed hyperbolic $2$-orbifold.

Repeating the first part of the proof of Theorem~7.3 from \cite{BS11} we can conclude that there is a subsequence $(\Orb_i)$ such that $\Orb_\infty$ covers $\Orb_i$ and for any compact subset $K \subset \Orb_\infty$ the covering is injective for large enough $i$. We now need an orbifold version of the covering theorem from \cite[Appendix]{BS11}, which implies that all but finitely many $\Orb_i$ fiber over $\Sph^1$ or $\Sph^1/(z\to\bar{z})$ with regular fibers homeomorphic to $\Sigma$. This result can be proved by a combination of the methods of Canary, Agol, and Biringer--Souto \cite{Can96, Ag04, BS11} but we shall not present the details here. Finally, we require a generalization to orbifolds of a result of White, Biringer, Souto which implies that for all but finitely many $\Orb_i$ we have $\rank(\pi_1(\Orb_i)) = \rank(\pi_1(\Sigma)) + 1$ or $\rank(\pi_1(\Orb_i)) = \rank(\pi_1(\Sigma^s)) + 1$, where $\Sigma^s$ denotes the singular fiber of the fibration over $\Sph^1/(z\to\bar{z})$. See e.g. \cite[Chapter~4]{Bir09} and the references therein for the manifold version of this theorem. The orbifold version can be proved by similar methods but requires some care. Both possibilities immediately lead to a contradiction, hence there are no such infinite sequences of orbifolds.
\end{proof}

\medskip

\noindent\textbf{Acknowledgements.} I would like to thank Hannah Alpert and Larry Guth for pointing out the gaps in my previous versions of the proof of Theorem~\ref{thm_GG}. 


\end{document}